\documentclass[12pt, twoside]{article}
\usepackage{latexsym}
\usepackage{amsmath}
\usepackage{amssymb}
\usepackage[all]{xy}
\usepackage{amsfonts}
\usepackage{verbatim}
\usepackage{amsthm}
\usepackage{mathrsfs}
\usepackage{epsfig}
\usepackage{xy}
\usepackage{array}
\usepackage{stmaryrd}
\usepackage{graphicx,color}
\usepackage{xcolor}
\usepackage{tikz}
\usetikzlibrary{arrows,calc}
\usepackage{etex}
\usepackage{mathdots}
\usepackage{float}
\usepackage{graphics}
\usepackage{pdflscape}
\usepackage{mathrsfs}\newcommand{\supp}{\mathsf{Supp}\hspace{.01in}}
\newcommand{\add}{\mathsf{add}\hspace{.01in}}\newcommand{\ind}{\mathsf{ind}\hspace{.01in}}
\usepackage{anysize,hyperref}
\input xypic
\xyoption{all}

\usepackage[perpage,symbol]{footmisc}
\usepackage{setspace}
\def\I{\mathcal{I}}
\def\Q{\mathcal{Q}}
\def\C{\mathscr{C}}

\def\id{\mathrm{id}}

\def\dr{\ar@{->}[r]}

\def\add{\mbox{add}}\def\Ker{\mbox{Ker}}
\def\Ext{\mbox{Ext}}\def\End{\mbox{End}}
\def\Hom{\mbox{Hom}}\def\dim{\mbox{dim}}

\begin{document}
\baselineskip=15pt
\title{\Large{\bf Auslander-Reiten $(n+2)$-angles and local finiteness}}
\medskip
\author{Jian He, Yu-Zhe Liu and Panyue Zhou\footnote{Corresponding author. Jian He is supported by the National Natural Science Foundation of China (Grant No. 12501048) and the Hongliu Outstanding Young Talents Funding of Lanzhou University of Technology. Yu-Zhe Liu was supported by National Natural Science Foundation of China (Grant Nos. 12561008, 12401042), the Science and Technology Foundation of the Guizhou S\&T Department (Grant Nos. VZD[2026]001, ZD[2025]085 and ZK[2024]YiBan066). Panyue Zhou is supported by the National Natural Science Foundation of China (Grant No. 12371034).}}

\date{}

\maketitle
\def\blue{\color{blue}}
\def\red{\color{red}}

\newtheorem{theorem}{Theorem}[section]
\newtheorem{lemma}[theorem]{Lemma}
\newtheorem{corollary}[theorem]{Corollary}
\newtheorem{proposition}[theorem]{Proposition}
\newtheorem{conjecture}{Conjecture}
\theoremstyle{definition}
\newtheorem{definition}[theorem]{Definition}
\newtheorem{question}[theorem]{Question}
\newtheorem{remark}[theorem]{Remark}
\newtheorem{remark*}[]{Remark}
\newtheorem{example}[theorem]{Example}
\newtheorem{example*}[]{Example}
\newtheorem{condition}[theorem]{Condition}
\newtheorem{condition*}[]{Condition}
\newtheorem{construction}[theorem]{Construction}
\newtheorem{construction*}[]{Construction}

\newtheorem{assumption}[theorem]{Assumption}
\newtheorem{assumption*}[]{Assumption}

\baselineskip=17pt
\parindent=0.5cm

\begin{abstract}
\baselineskip=16pt
Let $\mathcal C$ be an $(n+2)$-angulated category. Zhou proved that, when $n$ is odd, if the Auslander-Reiten $(n+2)$-angles generate the relations for the Grothendieck group of $\mathcal C$, then $\mathcal C$ is locally finite. Whether the corresponding statement remains valid for even $n$ is still open. In this paper, we give a partial affirmative answer to this problem by establishing a sufficient condition under which the same implication holds for even $n$. We further show that our sufficient condition is satisfied by a broad class of examples, thereby demonstrating that the result extends well beyond isolated cases.
\\[0.5cm]
\textbf{Keywords:} $(n+2)$-angulated category; locally finite; Auslander-Reiten $(n+2)$-angle; Grothendieck group\\[0.2cm]
\textbf{2020 Mathematics Subject Classification:} 18G80; 16G70
\medskip
\end{abstract}

\pagestyle{myheadings}
\markboth{\rightline {\scriptsize   J. He, Y. Liu and P. Zhou}}
         {\leftline{\scriptsize  Auslander-Reiten $(n+2)$-angles and local finiteness}}

\section{Introduction}

Auslander-Reiten theory, introduced by Auslander and Reiten in
\cite{AR1,AR2}, has become one of the fundamental tools in the representation
theory of Artin algebras. For an Artin algebra of finite representation type,
Butler \cite{BU} proved that the relations of its Grothendieck group are
generated by the Auslander-Reiten sequences, and Auslander \cite{A}
subsequently established the converse. This close relationship between
Auslander-Reiten theory and Grothendieck groups has since been extended to
several categorical settings.

The Auslander-Reiten theory of triangulated categories was initiated by
Happel \cite{H}, who introduced Auslander-Reiten triangles. Unlike module
categories of Artin algebras, however, an arbitrary triangulated category
need not admit Auslander-Reiten triangles. Reiten and Van den Bergh
\cite{RV} showed that the existence of Auslander-Reiten triangles in a
triangulated category is equivalent to the existence of a Serre functor.
Subsequently, Xiao and Zhu \cite{XZ1} proved that, for a locally finite
triangulated category, the relations of its Grothendieck group are generated
by the Auslander-Reiten triangles. A converse was obtained by Beligiannis
\cite{B} under the assumption that the triangulated category is compactly
generated. Related converse results have subsequently been established in
several special settings; see, for example, \cite{HJ1,PPPP}.

In \cite{GKO}, Geiss, Keller and Oppermann introduced $(n+2)$-angulated
categories as higher analogues of triangulated categories, with the
triangulated case recovered when $n=1$. Such categories arise, for example,
as $n$-cluster tilting subcategories of triangulated categories which are
closed under the $n$th power of the shift functor. Iyama and Yoshino \cite{IY} defined
the notion of Auslander-Reiten $(n+2)$-angles in a special class of
$(n+2)$-angulated categories. This notion was subsequently generalized to
arbitrary $(n+2)$-angulated categories by Fedele \cite{fe2}.

Let $\C$ be an $(n+2)$-angulated category. We denote by $\ind(\C)$ the set
of isomorphism classes of indecomposable objects in $\C$, by $K_0(\C,0)$
the split Grothendieck group of $\C$, and by $K_0(\C)$ its Grothendieck
group. Let
$$
\pi\colon K_0(\C,0)\longrightarrow K_0(\C)
$$
be the canonical epimorphism. In this setting, Zhou  \cite{Z2} proved the following
converse result.

\begin{theorem}\rm\cite[Theorem 3.13]{Z2}\label{000}
Let $n$ be odd. Suppose that $\Ker\pi$ is generated by the elements
$[A_{\bullet}]$ in $K_0(\C,0)$, where
$$
A_{\bullet}:~
A_0\xrightarrow{\alpha_0}A_1\xrightarrow{\alpha_1}A_2
\xrightarrow{\alpha_2}\cdots
\xrightarrow{\alpha_{n-1}}A_n
\xrightarrow{\alpha_n}A_{n+1}
\xrightarrow{\alpha_{n+1}}\Sigma^n A_0
$$
runs through all Auslander-Reiten $(n+2)$-angles in $\C$.
Then $\C$ is locally finite.
\end{theorem}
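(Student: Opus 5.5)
We follow the strategy of Xiao--Zhu and Beligiannis, transported to the $(n+2)$-angulated setting. Recall that $\C$ is \emph{locally finite} if for every indecomposable $X$ there are only finitely many indecomposables $Y$ (up to isomorphism) with $\Hom_{\C}(X,Y)\neq 0$, equivalently with $\Hom_{\C}(Y,X)\neq 0$. We fix an indecomposable $X$ and aim to show that $\Hom_{\C}(-,X)$ vanishes on all but finitely many indecomposables.

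The first step is to attach to $X$ an additive function on $K_0(\C,0)$. Define
$$
f_X\colon K_0(\C,0)\to \mathbb Z,\qquad [Y]\mapsto \ell\bigl(\Hom_{\C}(Y,X)\bigr),
$$
where $\ell$ is length over $\End(X)$ or dimension over the base field; this needs Hom-finiteness, which is part of the standing hypotheses. Let $A_\bullet$ be an Auslander--Reiten $(n+2)$-angle and apply $\Hom_{\C}(-,X)$. Its defining property (each $\alpha_i$ radical, $\alpha_0$ left almost split, $\alpha_{n+1}$ right almost split in the sense of Fedele) gives a long exact sequence of Hom functors. We expect this sequence to be exact except at the two ends, where the defect is the simple top $S_X$, which occurs only when $X\cong A_{n+1}$, or dually when $X\cong \Sigma^n A_0$. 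So the alternating sum $f_X([A_\bullet])=\sum_{i}(-1)^i f_X(A_i)$ vanishes unless $A_{n+1}\cong X$ or $A_0\cong \Sigma^{-n}X$. This is where the parity of $n$ enters. The class $[A_\bullet]=\sum_{i=0}^{n+1}(-1)^i[A_i]$ contains $[A_0]$ and $(-1)^{n+1}[A_{n+1}]=[A_{n+1}]$ with the same sign when $n$ is odd. The relation $[\Sigma^n A_0]=-[A_0]$ in $K_0(\C)$ then makes the bookkeeping of the boundary contributions consistent.

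The second step uses the hypothesis to conclude that $f_X$ factors modulo a finite correction. The generating relations of $\Ker\pi$ are the $[A_\bullet]$. Among these, only the one ending at $X$ and the one starting at $\Sigma^{-n}X$ pair nontrivially with $f_X$. Now take an arbitrary $(n+2)$-angle $B_\bullet$, with $[B_\bullet]\in\Ker\pi$. We write $[B_\bullet]=\sum_j m_j[A^{(j)}_\bullet]$, a finite sum. Only finitely many indecomposables occur as terms of the $A^{(j)}$.

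The key step is to choose, for each indecomposable $Y$ with $\Hom(Y,X)\neq 0$, a suitable $(n+2)$-angle $B_\bullet^Y$. One candidate is the angle built on a nonzero map $Y\to X$, or the split/rotated angle $Y\to 0\to\cdots\to \Sigma^nY\xrightarrow{1}\Sigma^nY$ after rotation. For this angle the class $[B^Y_\bullet]$, expressed via AR-angles, must involve the AR-angle ending at $X$, and the object $Y$ must appear among the terms of the AR-angles used. Following Beligiannis, the cleaner route is to show that the function $f_X$ is determined by finitely many values. The hypothesis says that $K_0(\C)$ is presented by $K_0(\C,0)$ modulo the AR-relations. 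A nonzero morphism $Y\to X$ then gives an $(n+2)$-angle whose Hom-sequence has a nonzero defect. That defect must be accounted for by the AR-angle with end term $X$ occurring with nonzero coefficient in its decomposition. This forces $Y$ into the finite set of indecomposables appearing in a fixed finite combination.

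We expect the main obstacle to be this last comparison. It requires relating the defect of an arbitrary $(n+2)$-angle under $\Hom(-,X)$ to the coefficient of the AR-angle ending at $X$ in a fixed manner. It also requires ruling out cancellations, and this is where the sign $(-1)^{n+1}=1$ for odd $n$ is essential: for even $n$ the two boundary terms could cancel. To handle this, we would first compute the defect function $\delta_X(B_\bullet)=f_X([B_\bullet])$ and prove it equals the multiplicity of the AR-angle at $X$ minus that at $\Sigma^{-n}X$. We would then exploit the oddness to show these contributions add rather than cancel, yielding finiteness.
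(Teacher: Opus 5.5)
Your second step has a genuine gap: it never produces a finite set that is independent of $Y$. You attach an angle $B^Y_\bullet$ to each $Y$ with $\Hom_{\C}(Y,X)\neq0$. The finitely many Auslander--Reiten angles in the decomposition of $[B^Y_\bullet]$ then vary with $Y$, so the ``fixed finite combination'' in which $Y$ is supposed to be trapped is not fixed.

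Moreover, your test functional $f_X=\dim_k\Hom_{\C}(-,X)$ only detects the position of the \emph{fixed} object $X$. From the long exact sequence, $f_X([A_\bullet])\neq0$ for an AR-angle forces $X\cong A_0$ or $X\cong\Sigma^nA_0$. It does not force $X\cong A_{n+1}$; that condition belongs to the covariant functor $\Hom_{\C}(X,-)$, as in Lemma \ref{zh2}, and you have mixed the two up. Evaluating $f_X$ on any decomposition $[B_\bullet]=\sum_j m_j[A^{(j)}_\bullet]$ therefore constrains only which AR-angles occur, never which $Y$ occur. So the defect identity you propose for $\delta_X$ cannot bound $\supp\Hom_{\C}(-,X)$. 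In addition, ``the multiplicity of the AR-angle at $X$'' is not well defined, since such decompositions need not be unique.

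The missing idea is to reverse the roles: fix one relation determined by $X$ and let the test object vary. Rotating the trivial angle gives the $(n+2)$-angle $\Sigma^{-n}X\to0\to\cdots\to0\to X\xrightarrow{1_X}X$. Its Euler class $[\Sigma^{-n}X]+(-1)^{n+1}[X]=[\Sigma^{-n}X]+[X]$ lies in $\Ker\pi$, so write it once as $\sum_{i=1}^r a_i[B^i_\bullet]$ with AR-angles $B^i_\bullet$. For any $U\in\ind(\C)$ with $\Hom_{\C}(U,X)\neq0$, Lemma \ref{zh1} gives $[U,\Sigma^{-n}X]+[U,X]=\sum_i a_i[U,B^i_\bullet]$. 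The left side is positive, and this is exactly where oddness enters, since both terms carry the same sign. Hence some $[U,B^i_\bullet]\neq0$, and Lemma \ref{zh2} forces $U\cong B^i_{n+1}$ or $U\cong\Sigma^{-n}B^i_{n+1}$, a finite list depending only on $X$.

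This is the proof of Theorem \ref{main2} with $m_X=1$. Parity plays no role in the ``only if'' direction of Lemma \ref{zh2}, which is the only direction used, so it does not sit at the AR-angle's boundary terms where you placed it. Finally, the equivalence you asserted between finiteness of $\supp\Hom_{\C}(X,-)$ and of $\supp\Hom_{\C}(-,X)$ is not automatic. It comes from the Serre functor $\mathbb{S}$ of Theorem \ref{thm1}, via $\Hom_{\C}(X,\mathbb{S}U)\cong D\Hom_{\C}(U,X)$.
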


When $n=1$, Theorem \ref{000} gives a converse to the result of Xiao and
Zhu. The assumption that $n$ is odd plays an essential role in Zhou's
proof. More precisely, the argument uses an alternating sum of Hom
dimensions to detect indecomposable objects occurring in the support of a
Hom functor. When $n$ is odd, the two relevant Hom dimensions occur with
the same sign, and hence the nonvanishing of one of them forces the
corresponding alternating sum to be nonzero. When $n$ is even, however,
these two terms occur with opposite signs and may cancel. Consequently,
the key detection argument no longer works, and the method used in
Theorem \ref{000} does not extend directly to the even case. Zhou therefore
posed the following question in \cite{Z2}.

\begin{question}\label{qe1}
Does Theorem \ref{000} remain true when $n$ is even?
\end{question}

The purpose of this paper is to give a partial affirmative answer to
Question \ref{qe1}. Our approach is motivated precisely by the cancellation
phenomenon described above. To overcome this obstruction, we introduce the
following condition.

\begin{condition}\label{cco1}
{\rm (See Condition \ref{con} for details.)}
For each $X\in\ind(\C)$, there exists a positive integer $m_X$ such that
for every $U\in\ind(\C)$ with $\Hom_{\C}(U,X)\neq0$, we have
$$
\dim_k\Hom_{\C}(U,\Sigma^{-m_Xn}X)
\neq
\dim_k\Hom_{\C}(U,X).
$$
\end{condition}

Condition \ref{cco1} is designed precisely to prevent the cancellation
phenomenon arising in the even case. More importantly, this condition is
not merely a technical assumption tailored to the proof. We show that it
is satisfied by a broad class of examples; see Lemma \ref{mmm}. Under
Condition \ref{cco1}, we obtain the following partial affirmative answer
to Question \ref{qe1}.

\begin{theorem}\rm(see Theorem \ref{main2} for details)
Let $n$ be even. Suppose that $\C$ satisfies Condition \ref{cco1} and that
$\Ker\pi$ is generated by the elements $[A_{\bullet}]$ in $K_0(\C,0)$,
where
$$
A_{\bullet}:~
A_0\xrightarrow{\alpha_0}A_1\xrightarrow{\alpha_1}A_2
\xrightarrow{\alpha_2}\cdots
\xrightarrow{\alpha_{n-1}}A_n
\xrightarrow{\alpha_n}A_{n+1}
\xrightarrow{\alpha_{n+1}}\Sigma^n A_0
$$
runs through all Auslander-Reiten $(n+2)$-angles in $\C$.
Then $\C$ is locally finite.
\end{theorem}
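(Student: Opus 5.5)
We follow Zhou's strategy for the odd case and replace only the detection step, which is exactly where Condition \ref{cco1} enters. Recall that $\C$ is locally finite if for each $X\in\ind(\C)$ there are only finitely many $U\in\ind(\C)$ with $\Hom_\C(U,X)\neq0$ (equivalently, with $\Hom_\C(X,U)\neq 0$, after applying the Serre functor, which exists because Auslander--Reiten $(n+2)$-angles exist).

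Fix $X\in\ind(\C)$ and let $m=m_X$. Define a group homomorphism $f_X\colon K_0(\C,0)\to\mathbb Z$ on indecomposables by
$$f_X([U])=\sum_{i=0}^{n+1}(-1)^i\bigl(\dim_k\Hom_\C(U,\Sigma^{-in}\Sigma^{-mn}X)\bigr)$$
or, more simply, by the alternating sum of $\dim_k\Hom_\C(U,-)$ over the shifts $X,\Sigma^{-n}X,\dots$ that appear when one applies $\Hom_\C(-,X)$ to an $(n+2)$-angle and uses the long exact sequence. The cleanest choice is the following. Applying $\Hom_\C(-,X)$ to any $(n+2)$-angle $A_\bullet$ gives a long exact sequence, periodic up to $\Sigma^{n}$. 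Hence on the class $[A_\bullet]=\sum_i(-1)^i[A_i]$ the functional $U\mapsto \dim_k\Hom(U,X)-\dim_k\Hom(U,\Sigma^{-mn}X)$ is controlled by the boundary terms. We take
$$f_X([U])=\dim_k\Hom_\C(U,X)-\dim_k\Hom_\C(U,\Sigma^{-mn}X).$$

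The key steps are as follows.

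\textbf{Step 1.} Show that $f_X$ vanishes on $[A_\bullet]$ for every Auslander--Reiten $(n+2)$-angle $A_\bullet$ except those with end term $A_{n+1}$ isomorphic to $X$ or $\Sigma^{mn}X$ (up to the shift bookkeeping). This uses the defining property of Auslander--Reiten $(n+2)$-angles: $\Hom_\C(-,Y)$ applied to the angle ending at $A_{n+1}$ is exact except at the end, where the defect is the simple functor at $A_{n+1}$. The alternating sum of dimensions of $\Hom_\C(A_i,Y)$ therefore equals $\pm\dim$ of the radical quotient, nonzero only if $A_{n+1}\cong Y$ (for $Y=X$ or $\Sigma^{-mn}X$). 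Since $\Ker\pi$ is generated by the $[A_\bullet]$, only finitely many generators (at most two) have nonzero $f_X$-value.

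\textbf{Step 2.} Show that every $z\in\Ker\pi$ satisfies an identity of the form $f_X(z)=0$ after correction by these finitely many generators. More precisely, use the hypothesis to write $[U]-[\text{something}]$ as a finite combination of generators. As in Zhou's argument, one considers for each $U$ with $\Hom(U,X)\neq0$ the element $[U]+(-1)^{?}[\Sigma^{n}U]$ or the class of the trivial angle $U\to 0\to\cdots\to \Sigma^nU$. The rotation axiom gives $[U]+(-1)^{n+1}[\Sigma^n U]\in\Ker\pi$; for $n$ even this is $[U]-[\Sigma^nU]$, and iterating gives $[U]-[\Sigma^{mn}U]\in\Ker\pi$. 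Express this element as $\sum_j c_j[A^{(j)}_\bullet]$. Applying a suitable Hom-functional that uses $\Hom(\Sigma^{mn}U,X)\cong\Hom(U,\Sigma^{-mn}X)$ yields
$$\dim_k\Hom(U,X)-\dim_k\Hom(U,\Sigma^{-mn}X)=\sum_j c_j\,g(A^{(j)}_\bullet),$$
where $g$ vanishes off the finitely many Auslander--Reiten angles ending at $X$ or at $\Sigma^{-mn}X$.

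\textbf{Step 3.} Condition \ref{cco1} makes the left side nonzero. Hence some $A^{(j)}_\bullet$ among the finitely many exceptional angles occurs with nonzero coefficient in the expression for $[U]-[\Sigma^{mn}U]$. Since the expression is in $K_0(\C,0)$, a free abelian group on $\ind(\C)$, comparing supports shows that $U$ or $\Sigma^{mn}U$ must appear as a summand of one of the terms of these finitely many fixed angles. This requires care, because the generators may cancel. The way around this is to work with a homomorphism $K_0(\C,0)\to\mathbb Z$ that vanishes on all but finitely many generators. Then only finitely many $[U]$ can have the relevant nonzero value, since the values on $\Ker\pi$ factor through the finitely many exceptional generators. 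Consequently only finitely many $U$ have $\Hom(U,X)\ne0$.

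The main obstacle is Step 3. One must set up the functional so that it is defined on all of $K_0(\C,0)$, kills all but finitely many Auslander--Reiten angles, and is nonzero on the element attached to each $U$ in the support, with those elements being distinct for distinct $U$. I would mirror Zhou's proof of Theorem \ref{000} verbatim, replacing his nonvanishing argument, which relies on equal signs for odd $n$, by the inequality from Condition \ref{cco1} applied to the shift $\Sigma^{-m_Xn}$ in place of $\Sigma^{-n}$.
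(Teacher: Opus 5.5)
There is a genuine gap, and it sits exactly at what you call the main obstacle. In Steps 2--3 you attach to each $U$ with $\Hom_\C(U,X)\neq0$ its own element $z_U=[U]-[\Sigma^{mn}U]\in\Ker\pi$, and you test it against one fixed functional $g=\dim_k\Hom_\C(-,X)$.

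Knowing that $g$ is nonzero on only finitely many Auslander--Reiten $(n+2)$-angles tells you only that every expansion of $z_U$ must use one of those angles. Nothing ties $U$ itself to them. The elements $z_U$ differ for different $U$, and each has its own non-unique expansion. The fact that $g(\Ker\pi)$ is generated by the values of $g$ on the exceptional angles puts no bound on how many elements of $\Ker\pi$ have nonzero $g$-value. So neither ``comparing supports'' nor ``the values factor through finitely many generators'' gives finiteness. (A minor point: for the contravariant functor $\Hom_\C(-,Y)$, the end of an AR angle that detects $Y$ is $A_0$, i.e. $Y\cong A_0$ or $Y\cong\Sigma^nA_0$, not $A_{n+1}$.)

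The missing idea is to swap the roles of element and functional.

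First, fix a single element that depends only on $X$. For $n$ even, the rotated trivial angles $\Sigma^{-(j+1)n}X\to0\to\cdots\to0\to\Sigma^{-jn}X\xrightarrow{1}\Sigma^{-jn}X$ give $[\Sigma^{-(j+1)n}X]-[\Sigma^{-jn}X]\in\Ker\pi$. Telescoping over $0\le j\le m-1$ gives $[\Sigma^{-mn}X]-[X]\in\Ker\pi$. Write this once as $\sum_{i=1}^r a_i[B^i_\bullet]$ with AR angles $B^i_\bullet$. This finite list does not depend on $U$.

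Then vary the functional. For each $U$ in the support, apply the covariant $\dim_k\Hom_\C(U,-)$ (Lemma \ref{zh1}) to get $[U,\Sigma^{-mn}X]-[U,X]=\sum_i a_i[U,B^i_\bullet]$. The left side is the same integer as your $-g(z_U)$, and it is nonzero by Condition \ref{cco1}. Hence $[U,B^i_\bullet]\neq0$ for some $i$, and Lemma \ref{zh2} forces $U\cong B^i_{n+1}$ or $U\cong\Sigma^{-n}B^i_{n+1}$, which is a finite set.

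In short, the detection lemma must be used in the direction ``a fixed AR angle is seen by only two functionals $\dim_k\Hom_\C(U,-)$''. It cannot be used as ``a fixed functional sees only finitely many AR angles''. Your closing deferral to Zhou's proof would amount to the correct argument if carried out, but the argument you actually wrote runs the other way and does not close. The Serre-functor step for $\supp\Hom_\C(X,-)$ is fine.
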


Thus, under Condition \ref{cco1}, Zhou's converse theorem extends from odd
to even $n$. Together with the existence of a broad class of examples
satisfying this condition, our result provides a partial affirmative answer
to Question \ref{qe1} and shows that the even case can be treated well
beyond isolated examples.

This article is organised as follows. In Section 2, we recall some
definitions and preliminary results concerning $(n+2)$-angulated
categories, Auslander-Reiten $(n+2)$-angles and Grothendieck groups.
In Section 3, we prove our main result and study examples satisfying
Condition \ref{cco1}.

\section{Preliminaries}

In this section, we recall some basic definitions and results that will be
used throughout the paper. We begin with $(n+2)$-angulated categories,
following the terminology and conventions of Geiss, Keller and Oppermann
\cite{GKO}. We then recall Auslander-Reiten $(n+2)$-angles and their
relationship with Serre functors.

\subsection{$(n+2)$-angulated categories}

Let $\C$ be an additive category equipped with an automorphism
$\Sigma^n\colon\C\rightarrow\C$, where $n$ is a positive integer.

An $(n+2)$-$\Sigma^n$-sequence in $\C$ is a sequence of objects and
morphisms
$$
A_0\xrightarrow{f_0}A_1\xrightarrow{f_1}A_2\xrightarrow{f_2}
\cdots\xrightarrow{f_{n-1}}A_n\xrightarrow{f_n}A_{n+1}
\xrightarrow{f_{n+1}}\Sigma^n A_0.
$$
Its \emph{left rotation} is the $(n+2)$-$\Sigma^n$-sequence
$$
A_1\xrightarrow{f_1}A_2\xrightarrow{f_2}A_3\xrightarrow{f_3}
\cdots\xrightarrow{f_n}A_{n+1}\xrightarrow{f_{n+1}}\Sigma^n A_0
\xrightarrow{(-1)^n\Sigma^n f_0}\Sigma^n A_1.
$$

A \emph{morphism} between two $(n+2)$-$\Sigma^n$-sequences is a
collection of morphisms
$\varphi=(\varphi_0,\varphi_1,\ldots,\varphi_{n+1})$
such that the following diagram commutes:
$$
\xymatrix{
A_0 \ar[r]^{f_0}\ar[d]^{\varphi_0}
& A_1 \ar[r]^{f_1}\ar[d]^{\varphi_1}
& A_2 \ar[r]^{f_2}\ar[d]^{\varphi_2}
& \cdots \ar[r]^{f_n}
& A_{n+1} \ar[r]^{f_{n+1}}\ar[d]^{\varphi_{n+1}}
& \Sigma^n A_0 \ar[d]^{\Sigma^n\varphi_0}\\
B_0 \ar[r]^{g_0}
& B_1 \ar[r]^{g_1}
& B_2 \ar[r]^{g_2}
& \cdots \ar[r]^{g_n}
& B_{n+1} \ar[r]^{g_{n+1}}
& \Sigma^n B_0.
}
$$
Here each row is an $(n+2)$-$\Sigma^n$-sequence. Such a morphism is
called an \emph{isomorphism} if each
$\varphi_i$, $0\leqslant i\leqslant n+1$, is an isomorphism in $\C$.

We are now ready to recall the definition of an $(n+2)$-angulated
category.

\begin{definition}\cite[Definition 2.1]{GKO}
An $(n+2)$-\emph{angulated category} is a triple
$(\C,\Sigma^n,\Theta)$, where $\C$ is an additive category,
$\Sigma^n$ is an automorphism of $\C$, called the $n$-suspension
functor, and $\Theta$ is a class of $(n+2)$-$\Sigma^n$-sequences,
whose elements are called $(n+2)$-angles, satisfying the following
axioms.

\begin{itemize}

\item[\textbf{(N1)}]
\begin{itemize}

\item[(a)]
The class $\Theta$ is closed under isomorphisms, direct sums and direct
summands.

\item[(b)]
For each object $A\in\C$, the trivial sequence
$$
A\xrightarrow{1_A}A\rightarrow0\rightarrow0\rightarrow
\cdots\rightarrow0\rightarrow\Sigma^n A
$$
belongs to $\Theta$.

\item[(c)]
Every morphism $f_0:A_0\rightarrow A_1$ in $\C$ can be extended to
an $(n+2)$-$\Sigma^n$-sequence
$$
A_0\xrightarrow{f_0}A_1\xrightarrow{f_1}A_2\xrightarrow{f_2}
\cdots\xrightarrow{f_{n-1}}A_n\xrightarrow{f_n}A_{n+1}
\xrightarrow{f_{n+1}}\Sigma^n A_0
$$
belonging to $\Theta$.

\end{itemize}

\item[\textbf{(N2)}]
An $(n+2)$-$\Sigma^n$-sequence belongs to $\Theta$ if and only if
its left rotation belongs to $\Theta$.

\item[\textbf{(N3)}]
Given a commutative diagram
$$
\xymatrix{
A_0 \ar[r]^{f_0}\ar[d]^{\varphi_0}
& A_1 \ar[r]^{f_1}\ar[d]^{\varphi_1}
& A_2 \ar[r]^{f_2}\ar@{-->}[d]^{\varphi_2}
& \cdots \ar[r]^{f_n}
& A_{n+1} \ar[r]^{f_{n+1}}\ar@{-->}[d]^{\varphi_{n+1}}
& \Sigma^n A_0 \ar[d]^{\Sigma^n\varphi_0}\\
B_0 \ar[r]^{g_0}
& B_1 \ar[r]^{g_1}
& B_2 \ar[r]^{g_2}
& \cdots \ar[r]^{g_n}
& B_{n+1} \ar[r]^{g_{n+1}}
& \Sigma^n B_0
}
$$
whose rows belong to $\Theta$, the dotted morphisms exist and complete
the diagram to a morphism of $(n+2)$-$\Sigma^n$-sequences.

\item[\textbf{(N4)}]
In the situation of \textbf{(N3)}, the morphisms
$\varphi_2,\varphi_3,\ldots,\varphi_{n+1}$ can be chosen such that
the mapping cone
$$\hspace{-7mm}
A_1\oplus B_0
\xrightarrow{
\left(
\begin{smallmatrix}
-f_1&0\\
\varphi_1&g_0
\end{smallmatrix}
\right)}
A_2\oplus B_1
\xrightarrow{
\left(
\begin{smallmatrix}
-f_2&0\\
\varphi_2&g_1
\end{smallmatrix}
\right)}
\cdots
\xrightarrow{
\left(
\begin{smallmatrix}
-f_{n+1}&0\\
\varphi_{n+1}&g_n
\end{smallmatrix}
\right)}
\Sigma^n A_0\oplus B_{n+1}
\xrightarrow{
\left(
\begin{smallmatrix}
-\Sigma^n f_0&0\\
\Sigma^n\varphi_1&g_{n+1}
\end{smallmatrix}
\right)}
\Sigma^n A_1\oplus\Sigma^n B_0
$$
belongs to $\Theta$.

\end{itemize}
\end{definition}

The preceding axioms extend the basic structure of triangulated
categories to the higher setting. In particular, when $n=1$, one
recovers the usual notion of a triangulated category. The following
standard construction provides an important source of genuinely higher
examples.

\begin{example}
Let $\mathcal T$ be a triangulated category with suspension functor
$\Sigma$, and let $\C$ be an $n$-cluster tilting subcategory of
$\mathcal T$ which is closed under $\Sigma^n$. Then $\C$ carries a
natural $(n+2)$-angulated structure; see \cite[Theorem 1]{GKO}.
In particular, when $n=2$, one obtains a $4$-angulated category whose
$4$-angles are of the form
$$
A_0\longrightarrow A_1\longrightarrow A_2\longrightarrow A_3
\longrightarrow\Sigma^2 A_0.
$$
Thus, $(n+2)$-angulated categories arise naturally as higher analogues
of triangulated categories rather than merely as formal generalizations.
\end{example}

We next recall the higher analogue of Auslander-Reiten triangles.

\subsection{Auslander-Reiten $(n+2)$-angles}

Let $\C$ be an $(n+2)$-angulated category. We denote by
${\rm rad}_{\C}$ the Jacobson radical of $\C$. More precisely,
${\rm rad}_{\C}$ is an ideal of $\C$ such that
${\rm rad}_{\C}(A,A)$ coincides with the Jacobson radical of the
endomorphism ring ${\rm End}_{\C}(A)$ for every object $A\in\C$.

The following definition gives the higher analogue of an
Auslander-Reiten triangle.

\begin{definition}
\cite[Definition 3.8]{IY} and \cite[Definition 5.1]{fe2}
Let $\C$ be an $(n+2)$-angulated category. An $(n+2)$-angle
$$
A_{\bullet}:~
\xymatrix{
A_0 \xrightarrow{~\alpha_0~}A_1
\xrightarrow{~\alpha_1~}A_2
\xrightarrow{~\alpha_2~}\cdots
\xrightarrow{~\alpha_{n-1}~}A_n
\xrightarrow{~\alpha_n~}A_{n+1}
\xrightarrow{~\alpha_{n+1}~}\Sigma^n A_0
}
$$
in $\C$ is called an \emph{Auslander-Reiten $(n+2)$-angle} if
$\alpha_0$ is left almost split, $\alpha_n$ is right almost split and,
when $n\geqslant2$,
$\alpha_1,\alpha_2,\ldots,\alpha_{n-1}$ belong to
${\rm rad}_{\C}$.
\end{definition}

For an Auslander-Reiten $(n+2)$-angle as above, we say that
$A_{\bullet}$ \emph{starts at} $A_0$ and \emph{ends at} $A_{n+1}$.
The almost split conditions encode universal factorization properties
at these two end terms. More precisely, every morphism from $A_0$ that
is not a section factors through $\alpha_0$, whereas every morphism to
$A_{n+1}$ that is not a retraction factors through $\alpha_n$.

We shall also need the corresponding category-level notion.

\begin{definition}\cite[Theorem 3.8]{Z1}
Let $\C$ be an $(n+2)$-angulated category. We say that $\C$
\emph{has Auslander-Reiten $(n+2)$-angles} if, for every
indecomposable object $X\in\C$, there exist an Auslander-Reiten
$(n+2)$-angle ending at $X$ and an Auslander-Reiten $(n+2)$-angle
starting at $X$. More precisely, for every indecomposable object
$X\in\C$, there exist Auslander-Reiten $(n+2)$-angles of the forms
$$
A_0\longrightarrow A_1\longrightarrow\cdots\longrightarrow
A_n\longrightarrow X\longrightarrow\Sigma^n A_0
$$
and
$$
X\longrightarrow B_1\longrightarrow\cdots\longrightarrow
B_n\longrightarrow B_{n+1}\longrightarrow\Sigma^n X.
$$
\end{definition}

Thus, saying that $\C$ has Auslander-Reiten $(n+2)$-angles is stronger
than merely asserting the existence of a single Auslander-Reiten
$(n+2)$-angle: such angles are required to start and end at every
indecomposable object of $\C$.

The following concrete example illustrates an Auslander-Reiten
$(n+2)$-angle in a genuinely higher setting. It is particularly relevant
to the present paper since the corresponding integer $n$ is even.

\begin{example}
\rm\cite[Examples 7.2 and 7.5]{fe2}
Let $Q=~~9 \to 8 \to \cdots  \to 2 \to 1$ and $\Phi=kQ/({\rm rad}\,kQ)^4$.
For each vertex $i$, let $p_i$ and $q_i$ denote the corresponding indecomposable projective
and indecomposable injective $\Phi$-modules, respectively.
Let $f_i=p_i~\text{for }1\leqslant i\leqslant9$ and $ f_{10}=q_7,~ f_{11}=q_8,~f_{12}=q_9$.
Then
\[\mathcal F={\rm add}(f_1\oplus f_2\oplus\cdots\oplus f_{12}) \]
is a $4$-cluster tilting subcategory of ${\rm mod}\,\Phi$, and
\[\overline{\mathcal F}
= {\rm add}\{\Sigma^{4i}\mathcal F\mid i\in\mathbb Z\}
\subseteq D^b({\rm mod}\,\Phi)\]
is a $6$-angulated category. In $\overline{\mathcal F}$,
the sequence $$f_1 \to  f_2 \to  f_5 \to  f_6 \to f_9\xrightarrow{\mu}f_{10} \to \Sigma^4 f_1$$
is an Auslander-Reiten $6$-angle.
Equivalently, since $f_{10}=q_7$, it can be written as
$$p_1 \to  p_2 \to  p_5 \to  p_6 \to  p_9\xrightarrow{\mu}q_7 \to \Sigma^4 p_1.$$
Here $n=4$, so this example gives a concrete Auslander-Reiten $(n+2)$-angle in the even case.
\end{example}

The existence of Auslander-Reiten $(n+2)$-angles is closely related to Serre duality.
We therefore recall the notion of a Serre functor and the characterization that will be used later.

\subsection{Serre functors}

Let $k$ be an algebraically closed field and let $\C$ be a $k$-linear
Hom-finite additive category. A $k$-linear autoequivalence
$\mathbb{S}:\C\rightarrow\C$ is called a \emph{Serre functor} of $\C$
if there exists a functorial isomorphism
$$
\Hom_{\C}(X,Y)\simeq D\Hom_{\C}(Y,\mathbb{S}X)
$$
for all objects $X,Y\in\C$, where $D(-)=\Hom_k(-,k)$
denotes the $k$-linear duality.

The following result of Zhou \cite{ZH} establishes the precise
relationship between the existence of Auslander-Reiten $(n+2)$-angles
and that of a Serre functor.

\begin{theorem}\rm\label{thm1}
\cite[Theorem 4.5]{ZH}
Let $\C$ be an $(n+2)$-angulated category. Then $\C$ has Auslander-Reiten $(n+2)$-angles if and only if $\C$ has a Serre functor.
\end{theorem}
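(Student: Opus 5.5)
The statement is Theorem \ref{thm1}: $\C$ has Auslander-Reiten $(n+2)$-angles if and only if $\C$ has a Serre functor. I would prove it in two directions, following the pattern of Reiten and Van den Bergh \cite{RV} for triangulated categories. The bridge between the two notions will be a functor $\tau_n$ on $\ind(\C)$ with $\mathbb{S}\simeq\Sigma^n\tau_n$.

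\emph{Step 1 (Serre functor implies AR angles).} Let $\mathbb S$ be a Serre functor and $X\in\ind(\C)$. The local ring $\End_\C(X)$ gives a simple top $\End_\C(X)/\mathrm{rad}$. Under the duality $\Hom_\C(X,X)\simeq D\Hom_\C(X,\mathbb S X)$, a nonzero linear form vanishing on $\mathrm{rad}_\C(X,X)$ corresponds to a nonzero morphism $\omega\colon X\to\mathbb S X$. Put $A_0:=\Sigma^{-n}\mathbb S X$, so that $\omega$ becomes a map $X\to\Sigma^n A_0$.

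Using (N1)(c) and (N2), I complete $\Sigma^{-n}\omega$ (suitably signed) to an $(n+2)$-angle $A_0\to A_1\to\cdots\to A_{n+1}=X\xrightarrow{\omega}\Sigma^nA_0$. I then replace it by a minimal version, discarding trivial summands via (N1)(a), so that $\alpha_1,\dots,\alpha_{n-1}$ are radical maps.

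\emph{Step 2 (almost split properties).} To show $\alpha_n$ is right almost split, take a non-retraction $h\colon Y\to X$. The long exact Hom sequence of an $(n+2)$-angle (a standard consequence of (N1)--(N3)) reduces this to showing $\omega h=0$. By naturality of Serre duality, $\omega h$ corresponds to the functional $\varphi\mapsto\lambda(h\varphi)$ on $\Hom_\C(X,Y)$, where $\lambda$ is the form defining $\omega$. This functional vanishes because $h\varphi\in\mathrm{rad}(X,X)$ whenever $h$ is not a retraction.

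Left almost splitness of $\alpha_0$ follows dually, since the AR property at $A_0$ is symmetric via $\mathbb S^{-1}$. The angle starting at $X$ is obtained the same way, replacing $X$ by $\mathbb S^{-1}\Sigma^n X$.

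\emph{Step 3 (AR angles imply Serre functor).} Conversely, given an AR angle ending at each indecomposable $X$, set $\tau_n X:=A_0$. I would prove a defect formula: for each $Y$, the map $\Hom_\C(Y,X)\to D\Hom_\C(\Sigma^n A_0,\ldots)$ built from the connecting morphism $\alpha_{n+1}$ yields a nondegenerate pairing $\Hom_\C(Y,X)\times\Hom_\C(X,\Sigma^n\tau_n Y')\to k$. This follows the Reiten--Van den Bergh argument: using the almost split properties at both ends, $\Hom_\C(-,X)$ has a simple top detected by $\alpha_{n+1}$, which gives $\Hom_\C(-,X)$ and $D\Hom_\C(X,-)$ representable by $\Sigma^n\tau_n$-type objects.

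This produces a fully faithful functor, essentially surjective because AR angles also start at every indecomposable. Extending additively gives $\mathbb S$.

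\emph{Main obstacle.} The hardest step will be Step 3: extending the objectwise duality to a functorial isomorphism. I would handle it as in \cite{RV}, via representability of $D\Hom_\C(X,-)$ on the Krull--Schmidt category, showing that the representing object is $\Sigma^n\tau_nX$ and that Yoneda then gives functoriality.
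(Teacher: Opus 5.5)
The paper gives no proof of this theorem; it quotes it from \cite[Theorem 4.5]{ZH}, which generalizes Reiten--Van den Bergh, and that is also your route. Steps 1 and 2 are sound:
\begin{itemize}
\item Rotating a completion of $\Sigma^{-n}\omega$ gives an angle ending in $X\xrightarrow{\omega}\Sigma^nA_0$.
\item Splitting off trivial summands to make $\alpha_1,\dots,\alpha_{n-1}$ radical only changes $A_1,\dots,A_n$, so $A_0$, $X$ and $\omega$ survive.
\item The identification of $\omega h$ with $\varphi\mapsto\lambda(h\varphi)$ is naturality of Serre duality in the second variable.
\item For $\alpha_0$, let $g\colon A_0\to W$ be a non-section and write $\Sigma^ng=\mathbb S g'$ with $g'\colon X\to\mathbb S^{-1}\Sigma^nW$, again a non-section. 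Then $\mathbb S g'\circ\omega$ corresponds to $\varphi\mapsto\lambda(\varphi g')=0$.
\end{itemize}

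The gap is Step 3, where the one substantive claim is asserted rather than proved. Right almost splitness of $\alpha_n$ only shows that the image of $(\alpha_{n+1})_*\colon\Hom_{\C}(-,X)\to\Hom_{\C}(-,\Sigma^nA_0)$ is the simple functor $\Hom_{\C}(-,X)/\mathrm{rad}_{\C}(-,X)$. That is an embedding of a simple functor, not an isomorphism $\Hom_{\C}(-,\Sigma^nA_0)\cong D\Hom_{\C}(X,-)$. Your pairing also has its variables garbled. For the AR angle ending at $X$ it should be $\Hom_{\C}(B,\Sigma^nA_0)\times\Hom_{\C}(X,B)\to k$, $(g,f)\mapsto\eta(gf)$, where $\eta$ is a linear form with $\eta(\alpha_{n+1})\neq0$. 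Such $\eta$ exists because $\alpha_{n+1}\neq0$, since $\alpha_n$ is not a retraction. Nondegeneracy of this pairing rests on two factorization facts.
\begin{itemize}
\item[(a)] Let $0\neq g\colon B\to\Sigma^nA_0$ and complete $g$ to an angle $B\xrightarrow{g}\Sigma^nA_0\xrightarrow{h}C_2\to\cdots$. Since $hg=0$ and $g\neq0$, $h$ is not a section, so $h$ factors through the left almost split map $\Sigma^n\alpha_0$. By rotation $\Sigma^n\alpha_0\circ\alpha_{n+1}=0$, hence $h\alpha_{n+1}=0$, and exactness of $\Hom_{\C}(X,-)$ gives $f$ with $gf=\alpha_{n+1}$.
\item[(b)] Dually, let $0\neq f\colon X\to B$ and take a rotated angle $\cdots\to C\xrightarrow{p}X\xrightarrow{f}B\to\cdots$. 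Then $p$ is not a retraction, so $p$ factors through $\alpha_n$, giving $\alpha_{n+1}p=0$ and $\alpha_{n+1}=gf$ for some $g$.
\end{itemize}
With $\eta(\alpha_{n+1})\neq0$, (a) and (b) give nondegeneracy in both variables. Finite-dimensionality then yields $\Hom_{\C}(-,\Sigma^nA_0)\cong D\Hom_{\C}(X,-)$, naturally in the first variable.

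The obstacle you single out is not the hard part. Once $D\Hom_{\C}(X,-)$ is representable for each indecomposable $X$ (hence for all objects, additively), Yoneda makes $\mathbb S$ a functor with the isomorphism natural in both variables. Full faithfulness is then formal. Essential surjectivity comes from applying the same representability to the AR angle starting at each indecomposable $W$, which gives $\mathbb S Z\cong\Sigma^nW$ for its end term $Z$.
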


\section{Our main result}
In this section, let $k$ be an algebraically closed field. We always assume that $\C$ is a $k$-linear Hom-finite Krull-Schmidt $(n+2)$-angulated category and $\C$ has  Auslander-Reiten $(n+2)$-angles.
We denote by $\ind(\C)$ the set of isomorphism classes of indecomposable objects in $\C$. For any $X\in\ind(\C)$, we denote by $\supp \Hom_{\C}(X,-)$ the subcategory of $\C$ generated by
objects $Y$ in $\ind(\C)$ with $\Hom_{\C}(X,Y)\neq0$. Similarly, $\supp\Hom_{\C}(-,X)$ denotes
the subcategory generated by objects $Y$ in $\ind(\C)$ with $\Hom_{\C}(Y,X)\neq 0$. If
$\supp\Hom_{\C}(X,-)$ ($\supp\Hom_{\C}(-,X)$, respectively) contains only finitely many
indecomposable objects, we say that $|\supp\Hom_{\C}(X,-)|<\infty $ ($|\supp\Hom_{\C}(-,X)|<\infty$
respectively).

\begin{definition}\cite[Definition 3.1]{ZH}
An $(n+2)$-angulated category $\C$ is called \emph{locally finite}
if $|\supp\Hom_{\C}(X,-)|<\infty$ and $|\supp\Hom_{\C}(-,X)|<\infty$, for any object $X\in\ind(\C)$.
\end{definition}

Suppose that $\C$ is an essentially small $(n+2)$-angulated category.
Hence, the collection of isomorphism classes $\langle A\rangle$ of objects $A$ in $\C$ forms a set, and let $F(\C)$ be the free abelian group on the set of isomorphism classes $\langle A\rangle$ of objects $A$ in $\C$. Given a $(n+2)$-angle
\[A_{\bullet}:~A_0  \to  A_1 \to A_2 \to \cdots \to A_n \to A_{n+1} \to \Sigma^n A_0\]
in $\C$, the corresponding Euler relation in $F(\C)$ is the alternating sum of isomorphism classes, that is,
\[\chi(A_{\bullet}):=[A_0]-[A_1]+[A_2]+\cdots+(-1)^{n+1}[A_{n+1}].\]

\begin{definition}[{\cite[Definition 2.1]{bt2} and \cite[Definition 2.2]{fe1}}]
Let $\C$ be an essentially small $(n+2)$-angulated category,
and $F(\C)$ the free abelian group on the set of isomorphism classes $[A]$ of objects $A$ in $\C$.
Morever, let $R(\C)$ be the subgroup of $F(\C)$ generated by the following sets of elements
\[\{\chi(A_\bullet) \mid A_\bullet ~\text{is a $(n+2)$-angle in } \C \}\]
in $\C$. The \emph{Grothendieck group} $K_0(\C)$ of $\C$ is the quotient group $F(\C)/R(\C)$.
Given an object $A \in \C$, the residue class $\langle A \rangle + R(\C)$ in $K_0(\C)$ is denoted by $[A]$.
\end{definition}

\begin{definition}[{\cite[Definition 3.6]{Z2}}]
Let $\C$ be an essentially small $(n+2)$-angulated category,
and $F(\C)$ the free abelian group on the set of isomorphism classes $\langle A \rangle$ of objects $A$ in $\C$.
Morever, let $R'(\C)$ be the subgroup of $F(\C)$ generated by the following sets of elements
\[\{\chi(A_\bullet) \mid A_\bullet ~\text{is a \textbf{split} $(n+2)$-angle in } \C \}\]
in $\C$. The \emph{{\bf\emph{split}} Grothendieck group} $K_0(\C,0)$ of $\C$ is the quotient group $F(\C)/R'(\C)$.  Given an object $A \in \C$,
the residue class $\langle A \rangle + R'(\C)$ in $K_0(\C,0)$ is denoted by $[A]$.
For simplicity, sometimes we also denote by $[A_{\bullet}]$ the element in $K_0(\C,0)$.
\end{definition}

It is clear that there exists a canonical epimorphism $\pi\colon K_0(\C,0)\to K_0(\C)$.

Zhou established the following local finiteness criterion for $(n+2)$-angulated categories when $n$ is odd.

\begin{theorem}[{\cite[Theorem 3.13]{Z2}}] \label{main22}
Let $n$ be odd. Suppose that $\Ker\pi$ is generated by the elements $[A_{\bullet}]$ in $K_0(\C,0)$, where
$$
A_{\bullet}:~
A_0\xrightarrow{\alpha_0}A_1\xrightarrow{\alpha_1}A_2
\xrightarrow{\alpha_2}\cdots\xrightarrow{\alpha_{n-1}}A_n
\xrightarrow{\alpha_n}A_{n+1}
\xrightarrow{\alpha_{n+1}}\Sigma^n A_0
$$
runs through all Auslander-Reiten $(n+2)$-angles in $\C$. Then $\C$ is locally finite.
\end{theorem}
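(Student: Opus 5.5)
The plan is to reproduce Zhou's argument. The core is a family of additive functions on $K_0(\C,0)$ that vanish on the Auslander--Reiten $(n+2)$-angles but that cannot vanish on all of $\Ker\pi$ unless supports are finite.

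For each $X\in\ind(\C)$, define a homomorphism $\gamma_X\colon K_0(\C,0)\to\mathbb{Z}$ on generators by $\gamma_X([Y])=\dim_k\Hom_{\C}(Y,X)$. This is well defined, since $\Hom$ is additive and split $(n+2)$-angles give zero. Apply $\Hom_{\C}(-,X)$ to an Auslander--Reiten $(n+2)$-angle $A_\bullet$. The resulting long sequence is exact. Because $\alpha_n$ is right almost split and the middle maps are radical, the alternating sum of dimensions of the $n+2$ terms is $0$ when $A_{n+1}\not\cong X$. When $A_{n+1}\cong X$, it equals $\pm1$ up to the two boundary contributions: the top of $\Hom(X,X)$ and the socle of $\Hom(A_0,X)$, the latter via Serre duality $A_0\cong \mathbb S\Sigma^{-n}X$. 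So $\gamma_X$ kills every AR $(n+2)$-angle except those ending at $X$ or starting at $\mathbb{S}\Sigma^{-n}X$. Under the hypothesis, $\gamma_X$ restricted to $\Ker\pi$ therefore takes values only on the multiples of the single class $[A_\bullet^X]$ of the AR angle ending at $X$.

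The key step is to apply $\gamma_X$ to the element of $\Ker\pi$ coming from an arbitrary $(n+2)$-angle. Following Zhou, I would take the angle obtained by rotating the trivial one, or more usefully an angle built from $X$ and $\Sigma^{n}$-shifts, so that its Euler class involves $[X]$ and $[\Sigma^{-mn}X]$. Writing that class as $\sum_i c_i[A^{(i)}_\bullet]$ with finitely many AR angles, I would evaluate $\gamma_U$ for every $U\in\supp\Hom_{\C}(-,X)$. The left side becomes an alternating combination of $\dim\Hom(U,X)$ and $\dim\Hom(U,\Sigma^{-m_Xn}X)$, together with the other terms of the chosen angle. Here parity enters. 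For odd $n$ these appear with the same sign, and the result is nonzero whenever $\Hom(U,X)\neq0$. For even $n$ they appear with opposite signs, giving $\dim\Hom(U,\Sigma^{-m_Xn}X)-\dim\Hom(U,X)$. Condition \ref{cco1}, with the iterate $\Sigma^{-m_Xn}$ realized by composing $m_X$ rotated angles, makes exactly this quantity nonzero. On the right side, $\gamma_U$ is nonzero only if some $A^{(i)}_\bullet$ ends at $U$. Since the sum is finite, only finitely many $U$ can occur, which proves $|\supp\Hom_{\C}(-,X)|<\infty$.

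The dual statement $|\supp\Hom_{\C}(X,-)|<\infty$ follows by the same argument with $\Hom_{\C}(X,-)$. Alternatively, Serre duality gives $\Hom(X,Y)\cong D\Hom(Y,\mathbb{S}X)$, so the second support equals the first one for $\mathbb{S}X$.

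The main obstacle is choosing the right element of $\Ker\pi$ whose $\gamma_U$-value is exactly $\dim\Hom(U,\Sigma^{-m_Xn}X)-\dim\Hom(U,X)$, with no stray contributions from the other terms. I would first try concatenating the rotations of a trivial $(n+2)$-angle $m_X$ times to express $[\Sigma^{m n}Y]\mp[Y]$ in $\Ker\pi$. I would also check carefully that the boundary (socle) terms do not reintroduce cancellation.
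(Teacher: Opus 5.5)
Your outline has the right shape: it is the argument the paper runs for Theorem \ref{main2}, specialized to a single rotation. As written, however, it does not prove this statement.

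\textbf{The iteration and Condition \ref{cco1}.} Condition \ref{cco1} and $m_X$ are not hypotheses of this theorem, so you cannot invoke them. For odd $n$, iterating is also counterproductive. Let $O_\bullet\colon\Sigma^{-n}X\to0\to\cdots\to0\to X\xrightarrow{1_X}X$. The shifted angles $\Sigma^{-jn}O_\bullet$ have Euler classes $[\Sigma^{-(j+1)n}X]+[\Sigma^{-jn}X]$. Telescoping them therefore forces alternating signs and yields $[X]+(-1)^{m-1}[\Sigma^{-mn}X]$, which has opposite signs when $m$ is even. Take $m=1$ instead. Then $\chi(O_\bullet)=[\Sigma^{-n}X]+(-1)^{n+1}[X]=[\Sigma^{-n}X]+[X]\in\Ker\pi$, and the angle has no other nonzero terms. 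Applying $[U,-]$ via Lemma \ref{zh1} gives $[U,\Sigma^{-n}X]+[U,X]\geq[U,X]>0$ for every $U\in\supp\Hom_{\C}(-,X)$. So the ``main obstacle'' you flag, and the worry about socle terms, do not arise.

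\textbf{The Hom functional on Auslander--Reiten angles.} Your evaluation here is wrong, and this is exactly the step that yields finiteness.
\begin{itemize}
\item For the contravariant functor $\Hom_{\C}(-,X)$, the controlling map is $\alpha_0$ (left almost split), not $\alpha_n$. The sum $\sum_i(-1)^i[A_i,X]$ is nonzero only if $X\cong A_0$ or $X\cong\Sigma^nA_0$. For example, for odd $n$ it is nonzero on the AR angle starting at $X$, which need not end at $X$.
\item ``Ending at $X$'' and ``starting at $\mathbb{S}\Sigma^{-n}X$'' describe the same angle, so you have lost the second exceptional angle.
\item The correct tool is Lemma \ref{zh2}, for the covariant functional. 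Applying $\Hom_{\C}(U,-)$ gives $\sum_i(-1)^i[U,A_i]=\dim K+(-1)^{n+1}\dim C$. Here $C=\Hom_{\C}(U,A_{n+1})/\mathrm{rad}(U,A_{n+1})$ is nonzero only if $U\cong A_{n+1}$; this is where $\alpha_n$ being right almost split is used. And $K$, the image of $\Hom_{\C}(U,\Sigma^{-n}A_{n+1})$ in $\Hom_{\C}(U,A_0)$, is nonzero only if $U\cong\Sigma^{-n}A_{n+1}$.
\item Hence the conclusion is $U\in\{B^i_{n+1},\Sigma^{-n}B^i_{n+1}\mid 1\leq i\leq r\}$, not ``some $B^i_\bullet$ ends at $U$''.
\end{itemize}

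\textbf{Variance mismatch.} You define $\gamma_U([Y])=\dim_k\Hom_{\C}(Y,U)$, but then read off $\dim_k\Hom_{\C}(U,X)$ and $\dim_k\Hom_{\C}(U,\Sigma^{-n}X)$. Your definition actually produces $[X,U]$ and $[\Sigma^{-n}X,U]$, which are not controlled by $U\in\supp\Hom_{\C}(-,X)$. Work with the covariant functional $[U,-]$ throughout. Alternatively, stay consistently contravariant and bound $\supp\Hom_{\C}(X,-)$ first, with exceptional objects $B^i_0$ and $\Sigma^nB^i_0$.

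With these repairs, together with your Serre-duality reduction $\supp\Hom_{\C}(X,-)=\supp\Hom_{\C}(-,\mathbb{S}X)$ (which is correct), the proof goes through.
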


The parity assumption in Theorem \ref{main22} plays an essential role in the proof.
When $n$ is odd, the relevant alternating Hom-dimension expression involves the two end contributions with the same sign,
so that the required nonvanishing can be detected directly.
For even $n$, these contributions occur with opposite signs and may cancel.
Furthermore, the argument used in \cite{Z2} does not extend directly to the even case.
This leads to the following open question.

\begin{question}\label{qe}
Does Theorem \ref{main22} remain valid when $n$ is even?
\end{question}

Our aim is to give a partial affirmative answer to Question \ref{qe}.
The main difficulty is precisely the possible cancellation described above.
To overcome this obstruction, we introduce the following condition,
which ensures that the two Hom dimensions relevant to our argument can be distinguished after a suitable iterate of the $n$-suspension functor.

\begin{condition}\label{con}
For each $X\in\ind(\C)$, there exists a positive integer $m_X$ such that,
for every $U\in\ind(\C)$ with $\Hom_{\C}(U,X)\neq0$, we have
\begin{equation}
 \dim_k\Hom_{\C}(U,\Sigma^{-m_Xn}X) \neq \dim_k\Hom_{\C}(U,X). \tag{$\color{red}\heartsuit$}
\end{equation}
\end{condition}

Condition \ref{con} may be viewed as a non-cancellation condition along the $\Sigma^n$-orbit of an indecomposable object.
We record two immediate observations which help clarify its meaning.

\begin{remark}
(1) If $({\color{red}\heartsuit})$ holds, then $X\not\simeq\Sigma^{-m_Xn}X$.
The converse, however, does not hold in general, since non-isomorphic objects may still have Hom spaces of the same dimension from a given object $U$.

(2) Taking $U=X\neq0$ gives a particularly transparent situation. If
$$ \Hom_{\C}(X,\Sigma^{-m_Xn}X)=0, $$
then
$$ \dim_k\Hom_{\C}(X,\Sigma^{-m_Xn}X)
= 0 < \dim_k\End_{\C}(X), $$
since $\End_{\C}(X)$ contains the identity morphism. Hence
$({\color{red}\heartsuit})$ is automatically satisfied for $U=X$ in this case.
\end{remark}

Before proving the main result, we recall two ingredients from \cite{Z2} that motivate the Hom-dimension argument used below.
For convenience, we write
$$ [A,B]:=\dim_k\Hom_{\C}(A,B). $$

\begin{lemma}[{\cite[Lemma 3.11]{Z2}\label{zh2}}]\rm
Let
$$ A_{\bullet}:~
A_0\xrightarrow{\alpha_0}A_1\xrightarrow{\alpha_1}A_2
\xrightarrow{\alpha_2}\cdots\xrightarrow{\alpha_{n-1}}A_n
\xrightarrow{\alpha_n}A_{n+1}
\xrightarrow{\alpha_{n+1}}\Sigma^n A_0 $$
be an Auslander-Reiten $(n+2)$-angle in $\C$.
If $U\in\ind(\C)$, then
$$ [U,A_0]-[U,A_1]+[U,A_2]+\cdots
+(-1)^{n+1}[U,A_{n+1}] \neq 0 $$
if and only if $U\simeq A_{n+1}$ or $U\simeq\Sigma^{-n}A_{n+1}$.
\end{lemma}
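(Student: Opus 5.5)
The plan is to apply the homological functor $\Hom_{\C}(U,-)$ to the Auslander-Reiten $(n+2)$-angle and read off the alternating sum from the resulting long exact sequence. The sum differs from zero only by two boundary defects, which the almost split properties identify precisely.

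First I will recall that, for any $(n+2)$-angle, $\Hom_{\C}(U,-)$ gives a long exact sequence (a standard consequence of (N1)–(N3), as in \cite{GKO}). Using (N2), I rotate $A_\bullet$ backwards so that $\Sigma^{-n}A_n\xrightarrow{\pm\Sigma^{-n}\alpha_n}\Sigma^{-n}A_{n+1}\xrightarrow{\pm\Sigma^{-n}\alpha_{n+1}}A_0$ precedes $A_0$. This gives an exact sequence
$$
(U,\Sigma^{-n}A_n)\to(U,\Sigma^{-n}A_{n+1})\to(U,A_0)\to(U,A_1)\to\cdots\to(U,A_n)\xrightarrow{(U,\alpha_n)}(U,A_{n+1}).
$$
Let $I$ be the image of $(U,\Sigma^{-n}A_{n+1})\to(U,A_0)$, and let $C$ be the cokernel of $(U,\alpha_n)$. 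Truncating gives an exact sequence $0\to I\to(U,A_0)\to\cdots\to(U,A_{n+1})\to C\to0$. Taking the alternating sum of dimensions yields
$$
\sum_{i=0}^{n+1}(-1)^i[U,A_i]=\dim_k I+(-1)^{n+1}\dim_k C .
$$

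Next I will compute $C$ and $I$. Since $\alpha_n$ is right almost split, it is not a retraction, and every non-retraction $U\to A_{n+1}$ factors through it. Because $U$ and $A_{n+1}$ are indecomposable, non-retractions $U\to A_{n+1}$ are exactly the radical maps. Hence the image of $(U,\alpha_n)$ is ${\rm rad}_{\C}(U,A_{n+1})$, and $C\cong\Hom_{\C}(U,A_{n+1})/{\rm rad}_{\C}(U,A_{n+1})$. This quotient is $0$ if $U\not\simeq A_{n+1}$. If $U\simeq A_{n+1}$ it is $\End(A_{n+1})/{\rm rad}\cong k$, since $\End(A_{n+1})$ is local, finite-dimensional and $k$ is algebraically closed.

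Since $\Sigma^{-n}$ is an automorphism, $\Sigma^{-n}\alpha_n$ is again right almost split. So the same argument shows that the kernel of $(U,\Sigma^{-n}A_{n+1})\to(U,A_0)$ is ${\rm rad}_{\C}(U,\Sigma^{-n}A_{n+1})$. Therefore $\dim_k I$ is $1$ if $U\simeq\Sigma^{-n}A_{n+1}$ and $0$ otherwise.

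Combining, the alternating sum equals $\delta_{U,\Sigma^{-n}A_{n+1}}+(-1)^{n+1}\delta_{U,A_{n+1}}$, where $\delta_{U,V}$ is $1$ if $U\simeq V$ and $0$ otherwise. If $U$ is isomorphic to neither object, the sum is $0$, which proves the ``only if'' direction. Conversely, if exactly one of the two isomorphisms holds, the sum is $\pm1\neq0$. If both hold, then $A_{n+1}\simeq\Sigma^{-n}A_{n+1}$ and the sum is $1+(-1)^{n+1}$, which is $2$ for $n$ odd.

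The main obstacle is exactly this last case. For $n$ even with $A_{n+1}\simeq\Sigma^{-n}A_{n+1}$, the two defects cancel. The ``if'' direction therefore needs $n$ odd, as in \cite{Z2}, or $A_{n+1}\not\simeq\Sigma^{-n}A_{n+1}$, so the statement as quoted must be read under one of these hypotheses. This is the cancellation that Condition \ref{con} is designed to circumvent. The remaining technical care is in the signs from rotation and in checking exactness at the ends, both of which are routine.
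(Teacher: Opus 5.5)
Your argument is correct. The paper gives no proof of this lemma and only quotes \cite[Lemma 3.11]{Z2}. Your route is the natural one: read the alternating sum off the long exact $\Hom_{\C}(U,-)$-sequence of the backward-rotated angle. The two end defects $I$ and $C$ are then the radical quotients of $\Hom_{\C}(U,\Sigma^{-n}A_{n+1})$ and $\Hom_{\C}(U,A_{n+1})$, computed via the right almost split maps $\Sigma^{-n}\alpha_n$ and $\alpha_n$.

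Your caveat is also correct and should be kept. The value $\delta_{U,\Sigma^{-n}A_{n+1}}+(-1)^{n+1}\delta_{U,A_{n+1}}$ shows that, as quoted here without a parity assumption, the ``if'' direction fails for even $n$ whenever $A_{n+1}\simeq\Sigma^{-n}A_{n+1}$. The paper's own last example is such a case: take $\C={\rm proj}\,\Lambda$ with $\Sigma^2=\id_{\C}$, so the sum for $U=A_3$ is $1-1=0$. For even $n$, the correct statement is that the sum is nonzero if and only if exactly one of the two isomorphisms holds.

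This does not affect Theorem \ref{main2}. Its proof uses only the ``only if'' direction, and your formula gives that direction for every $n$.
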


\begin{lemma}\rm\cite[Lemma 3.12]{Z2}\label{zh1}
Assume that
\[b_1[C_1]+b_2[C_2]+\cdots+b_m[C_m]=0\]
in $K_0(\C,0)$ for integers $b_i$ and objects $C_1,C_2,\ldots,C_m$ in $\C$. Then
\[b_1[X,C_1]+b_2[X,C_2]+\cdots+b_m[X,C_m]=0\]
in $\mathbb Z$ for every object $X$ in $\C$.
\end{lemma}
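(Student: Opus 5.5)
The plan is to realize $[X,-]$ as a group homomorphism out of $K_0(\C,0)$. Fix an object $X\in\C$ and define a homomorphism of abelian groups
$$
\varphi_X\colon F(\C)\longrightarrow\mathbb Z,\qquad \langle A\rangle\longmapsto [X,A]=\dim_k\Hom_{\C}(X,A),
$$
extended $\mathbb Z$-linearly. This is well defined because $[X,A]$ depends only on the isomorphism class of $A$, and it is finite because $\C$ is Hom-finite. The statement to be proved is exactly that $\varphi_X$ factors through $K_0(\C,0)=F(\C)/R'(\C)$. So it suffices to show that $\varphi_X$ vanishes on every generator $\chi(A_\bullet)$ of $R'(\C)$, i.e.\ on the Euler relation of every split $(n+2)$-angle. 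Once this is done, $\varphi_X$ applied to a relation $\sum b_i[C_i]=0$ in $K_0(\C,0)$ gives $\sum b_i[X,C_i]=0$.

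First I would record that $\varphi_X$ is additive on direct sums at the level of Euler characteristics. Since $[X,A\oplus B]=[X,A]+[X,B]$, for a direct sum $A_\bullet\oplus B_\bullet$ of $(n+2)$-$\Sigma^n$-sequences we have
$$
\varphi_X(\chi(A_\bullet\oplus B_\bullet))=\varphi_X(\chi(A_\bullet))+\varphi_X(\chi(B_\bullet)).
$$
This holds even though $\chi(A_\bullet\oplus B_\bullet)\neq\chi(A_\bullet)+\chi(B_\bullet)$ in $F(\C)$ itself. The same additivity holds for isomorphic sequences, which have identical Euler relations.

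Next I would use the structure of split $(n+2)$-angles. A split angle is, up to isomorphism, a finite direct sum of rotations of trivial angles $A\xrightarrow{1}A\to0\to\cdots\to0\to\Sigma^nA$. This is what ``split'' means in \cite{Z2}; otherwise it is standard, using (N1), (N2) and Krull--Schmidt. A rotation of a trivial angle has exactly two nonzero terms, in adjacent positions $i,i+1$. These terms are either $A,A$ or $\Sigma^nA,\Sigma^nA$ (up to sign of maps), because of the shift at the end. Hence its Euler relation is $\pm(\langle B\rangle-\langle B\rangle)=0$ in $F(\C)$, and a fortiori $\varphi_X$ of it is $0$. By the additivity above, $\varphi_X(\chi(A_\bullet))=0$ for every split angle $A_\bullet$.

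The main obstacle is the bookkeeping in the rotation step. One must check that after $j$ rotations of a trivial angle the two nonzero entries are indeed equal objects sitting in consecutive positions, and that the boundary case, where the last entry $\Sigma^nA_0$ is not counted in $\chi$, does not break the cancellation. If instead ``split'' is taken to mean that $\Hom_\C(X,-)$ turns the angle into a split exact sequence, I would use an alternative argument. The long exact sequence obtained from $\Hom_{\C}(X,-)$ restricted to $A_0\to\cdots\to A_{n+1}$ is then exact with zero end maps, so its alternating sum of dimensions vanishes; this gives the same conclusion.
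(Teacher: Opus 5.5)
Your reduction is sound: $\varphi_X=[X,-]$ is well defined on $F(\C)$ and additive on direct sums of sequences, and the lemma says exactly that $\varphi_X$ vanishes on $R'(\C)$. The gap is the step you flagged yourself, and it comes out the wrong way. Not every rotation of a trivial angle has two equal nonzero terms in adjacent counted positions. One left rotation of $A\xrightarrow{1}A\to0\to\cdots\to0\to\Sigma^nA$ is
\[
A\to0\to\cdots\to0\to\Sigma^nA\xrightarrow{(-1)^n}\Sigma^nA .
\]
Its counted terms are $A$ in position $0$ and $\Sigma^nA$ in position $n+1$, and the identity sits on the uncounted connecting morphism. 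With $A=\Sigma^{-n}X$ this is, up to isomorphism, the angle $O_\bullet$ from the proof of Theorem~\ref{main2}. Moreover $\varphi_Y(\chi(O_\bullet))=[Y,\Sigma^{-n}X]+(-1)^{n+1}[Y,X]$. For $n$ odd and $Y=X\neq0$ this is at least $\dim_k\End_{\C}(X)>0$. For $n$ even, in the setting of Lemma~\ref{mmm} with $Y=X$ indecomposable, it equals $-\dim_k\End_{\C}(X)$. So if ``split'' allowed all rotations of trivial angles, the lemma would be false. For even $n$ one would get $[\Sigma^{-m_Xn}X]=[X]$ in $K_0(\C,0)$, and the lemma would then contradict Condition~\ref{con} at $U=X$. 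This is why the paper explicitly calls $O_\bullet$ non-split.

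The missing ingredient is that a split $(n+2)$-angle has zero connecting morphism $\alpha_{n+1}\colon A_{n+1}\to\Sigma^nA_0$, exactly as for split triangles $A\to A\oplus B\to B\xrightarrow{0}\Sigma A$. Among direct sums of rotations of trivial angles, these are the ones in which every identity sits on one of $\alpha_0,\dots,\alpha_n$. Once you use this, you need no decomposition theorem for split angles. Applying $\Hom_{\C}(X,-)$ to an $(n+2)$-angle gives a long exact sequence
\[
\cdots\to\Hom_{\C}(X,\Sigma^{-n}A_{n+1})\to\Hom_{\C}(X,A_0)\to\cdots\to\Hom_{\C}(X,A_{n+1})\to\Hom_{\C}(X,\Sigma^nA_0)\to\cdots
\]
whose two outer maps are induced by $\pm\Sigma^{-n}\alpha_{n+1}$ and $\alpha_{n+1}$. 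When $\alpha_{n+1}=0$ both vanish. What remains is an exact sequence $0\to\Hom_{\C}(X,A_0)\to\cdots\to\Hom_{\C}(X,A_{n+1})\to0$ of finite-dimensional spaces, so $\varphi_X(\chi(A_\bullet))=\sum_i(-1)^i[X,A_i]=0$. This is your fallback argument, but it must rest on $\alpha_{n+1}=0$, not on a conditional reinterpretation of ``split''. (A minor point: for $n$ odd the Euler relation of a trivial angle in $F(\C)$ is $\langle 0\rangle$, not $0$. This is harmless since $[X,0]=0$.)
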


The preceding discussion isolates the obstruction in the even case and suggests how Condition \ref{con} can be used to restore the necessary nonvanishing. Combining this condition with the hypothesis on the Grothendieck group allows us to recover a local finiteness criterion for even $n$. We are now ready to state the main result of the paper.

\begin{theorem}\label{main2}
Let $n$ be even. Suppose that $\C$ satisfies Condition \ref{con} and
${\rm Ker}\pi$ is generated by the elements $[A_{\bullet}]$ in $K_0(\C,0)$, where
\[A_{\bullet}:~
A_0\xrightarrow{\alpha_0}A_1\xrightarrow{\alpha_1}A_2
\xrightarrow{\alpha_2}\cdots\xrightarrow{\alpha_{n-1}}A_n
\xrightarrow{\alpha_n}A_{n+1}
\xrightarrow{\alpha_{n+1}}\Sigma^n A_0\]
runs through all Auslander-Reiten $(n+2)$-angles in $\C$.
Then $\C$ is locally finite.
\end{theorem}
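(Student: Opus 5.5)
The plan is to follow the strategy of Zhou's proof of Theorem \ref{main22}. We replace the element of $\Ker\pi$ used there by one built from $X$ and $\Sigma^{-m_Xn}X$, so that Condition \ref{con} supplies exactly the nonvanishing that parity supplied in the odd case.

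\emph{Step 1: an element of $\Ker\pi$.} Fix $X\in\ind(\C)$. Start from the trivial $(n+2)$-angle $X\xrightarrow{1_X}X\to0\to\cdots\to0\to\Sigma^nX$ of (N1)(b). Rotating it suitably (N2) gives the $(n+2)$-angle
$$X\to0\to\cdots\to0\to\Sigma^nX\xrightarrow{\pm1}\Sigma^nX,$$
whose Euler relation is $[X]+(-1)^{n+1}[\Sigma^nX]$. Since $n$ is even, this gives $[X]=[\Sigma^nX]$ in $K_0(\C)$. Applying this to $\Sigma^{-n}X,\Sigma^{-2n}X,\dots$ yields $[X]=[\Sigma^{-m_Xn}X]$ in $K_0(\C)$. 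Hence $[X]-[\Sigma^{-m_Xn}X]\in\Ker\pi$.

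The step I expect to need the most care is the bookkeeping here. One has to track which rotation produces the sign and check that the resulting relation is genuinely $[X]-[\Sigma^nX]$ when $n$ is even, rather than a sum. This is exactly where parity changes the argument relative to \cite{Z2}. If the rotation is awkward, one can instead use the angle $0\to X\to X\to0\to\cdots$ obtained by rotating backwards; either way the two outer terms differ by one $\Sigma^n$ and carry signs $(-1)^0$ and $(-1)^{n+1}$ relative to each other.

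\emph{Step 2: expressing the element through Auslander-Reiten $(n+2)$-angles.} By hypothesis there are finitely many Auslander-Reiten $(n+2)$-angles $A^{(1)}_\bullet,\dots,A^{(t)}_\bullet$ and integers $c_1,\dots,c_t$ such that
$$[X]-[\Sigma^{-m_Xn}X]=\sum_{j=1}^t c_j[A^{(j)}_\bullet]$$
in $K_0(\C,0)$. Now let $U\in\ind(\C)$ with $\Hom_\C(U,X)\neq0$. Lemma \ref{zh1} applied with the object $U$ gives
$$[U,X]-[U,\Sigma^{-m_Xn}X]=\sum_{j=1}^t c_j\sum_{i=0}^{n+1}(-1)^i[U,A^{(j)}_i].$$
By Condition \ref{con} the left side is nonzero, so some inner alternating sum is nonzero. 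Lemma \ref{zh2} then forces $U\simeq A^{(j)}_{n+1}$ or $U\simeq\Sigma^{-n}A^{(j)}_{n+1}$ for some $j$. Therefore $\supp\Hom_\C(-,X)$ is contained in the finite set $\{A^{(j)}_{n+1},\Sigma^{-n}A^{(j)}_{n+1}\}_{j}$, up to indecomposable summands.

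\emph{Step 3: the covariant support.} Since $\C$ has Auslander-Reiten $(n+2)$-angles, Theorem \ref{thm1} gives a Serre functor $\mathbb S$. Serre duality gives $\Hom_\C(X,Y)\neq0$ if and only if $\Hom_\C(Y,\mathbb SX)\neq0$. As $\mathbb SX$ is indecomposable, $\supp\Hom_\C(X,-)=\supp\Hom_\C(-,\mathbb SX)$, which is finite by Step 2 applied to $\mathbb SX$ (using its own $m_{\mathbb SX}$). Hence $\C$ is locally finite.
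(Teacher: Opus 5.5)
Your proposal is correct and follows the paper's proof essentially step for step. It uses the same telescoped element $[X]-[\Sigma^{-m_Xn}X]\in\Ker\pi$; the paper's angle $O_\bullet$ is exactly your left-rotated trivial angle written for $\Sigma^{-n}X$. It then combines Lemma \ref{zh1}, Condition \ref{con} and Lemma \ref{zh2} in the same way to bound $\supp\Hom_{\C}(-,X)$. For the covariant side it uses Serre duality: you apply the contravariant bound to $\mathbb{S}X$, whereas the paper transports $\supp\Hom_{\C}(-,X)$ along $\mathbb{S}$, and the two are equivalent.

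The only slip is your fallback remark. The angle $0\to X\xrightarrow{1_X}X\to0\to\cdots\to0$ has Euler relation $-[X]+[X]=0$ and gives nothing. This does not affect the proof, because the left rotation you actually use is correct.
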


\proof
Let $X\in \ind(\C)$ and $m=m_{X}$, there is a non-split $(n+2)$-angle
$$O_{\bullet}:~\Sigma^{-n}X\xrightarrow{}0\xrightarrow{}0\xrightarrow{}\cdots\xrightarrow{}0
\xrightarrow{}X\xrightarrow{1_{X}}X.$$
By applying $\Sigma^{-jn}$ to the $(n+2)$-angle $O_{\bullet}$ for $j=0,1,\cdots,m-1$,
we obtain the $(n+2)$-angles
$$\Sigma^{-jn}O_{\bullet}:~\Sigma^{-(j+1)n}X\xrightarrow{}0\xrightarrow{}0\xrightarrow{}\cdots\xrightarrow{}0
\xrightarrow{}\Sigma^{-jn}X\xrightarrow{1_{\Sigma^{-jn}X}}\Sigma^{-jn}X.$$
The corresponding Euler relations are
$$\chi(\Sigma^{-jn}O_{\bullet})=[\Sigma^{-(j+1)n}X]+(-1)^{n+1}[\Sigma^{-jn}X]\in {\rm Ker}\pi.$$
Note that $n$ is an even, thus, $[\Sigma^{-(j+1)n}X]-[\Sigma^{-jn}X]\in{\rm Ker}\pi.$ Using the fact that $ {\rm Ker}\pi$ is an additive subgroup, we obtain
\begin{equation*}
  \sum\limits_{j=0}^{m-1}([\Sigma^{-(j+1)n}X]-[\Sigma^{-jn}X])\in {\rm Ker}\pi.
  \tag{$\color{red}\diamondsuit$}
\end{equation*}
Expanding the left-hand side of $({\color{red}\diamondsuit})$ yields a sum:
$$([\Sigma^{-n}X]-[X])+([\Sigma^{-2n}X]-[\Sigma^{-n}X])+([\Sigma^{-3n}X]-[\Sigma^{-2n}X])$$
$$+\cdots+([\Sigma^{-mn}X]-[\Sigma^{-(m-1)n}X])=[\Sigma^{-mn}X]-[X]\in \Ker\pi.$$
By the assumption, there exist finitely many Auslander-Reiten $(n+2)$-angles $B_{\bullet}^{1},B_{\bullet}^{2},\cdots,B_{\bullet}^{r}$ and integers  $a_{1},a_{2},\cdots,a_{r}$ such that $[\Sigma^{-mn}X]-[X]=\sum\limits_{i=1}^ra_{i}[B_{\bullet}^{i}]$, where
$$B_{\bullet}^{i}:~B_{0}^{i}\xrightarrow{\beta_{0}^{i}}B_{1}^{i}\xrightarrow{\beta_{1}^{i}}B_{2}^{i}\xrightarrow{\beta_{2}^{i}}\cdots\xrightarrow{\beta_{n-2}^{i}}B_{n-1}^{i}
\xrightarrow{\beta_{n-1}^{i}}B_{n}^{i}\xrightarrow{\beta_{n}^{i}}B_{n+1}^{i}\xrightarrow{\beta_{n+1}^{i}}\Sigma^{n} B_{0}^{i}.$$
Take any $U\in\supp\Hom_{\C}(-,X)$, by Lemma \ref{zh1}, we obtain that the equality
$$[U,\Sigma^{-mn}X]-[U,X]=\sum\limits_{i=1}^ra_{i}[U,B_{\bullet}^{i}].$$
Since $\Hom_{\C}(U,X)\neq0$, by Condition \ref{con}, we have $$[U,\Sigma^{-mn}X]-[U,X]=\dim_{k}\Hom_{\C}(U,\Sigma^{-m_{X}n}X)-\dim_{k}\Hom_{\C}(U,X)\neq 0.$$
So there exists at least an integer $i\in{1,2,\cdots,r}$ such that $[U,B_{\bullet}^{i}]\neq 0$, that is to say,
 $$[U,B_{0}^{i}]-[U,B_{1}^{i}]+[U,B_{2}^{i}]+\cdots+(-1)^{n+1}[U,B_{n+1}^{i}]\neq 0.$$
By lemma \ref{zh2}, we know that the indecomposable object $U$ is isomorphic to an object in the finite set $\{B_{n+1}^{i},\Sigma^{-n}B_{n+1}^{i}~|~1\leqslant i\leqslant r\}$. Thus $\supp\Hom_{\C}(-,X)$ contains only finitely many indecomposable objects, that is, $|\supp\Hom_{\C}(-,X)|<\infty$.

It remains to prove $|\supp\Hom_{\C}(X,-)|<\infty$. Since $\C$ has Auslander-Reiten $(n+2)$-angles, it follows from Theorem \ref{thm1} that $\C$ admits a Serre functor $\mathbb {S}$. This gives the isomorphism $\Hom_{\C}(X,\mathbb {S}U)\cong D\Hom_{\C}(U,X)\neq 0$, and so $|\supp\Hom_{\C}(X,-)|<\infty$. This shows that $\C$ is locally finite.
\qed
\vspace{2mm}

Condition \ref{con} is not merely a technical assumption introduced for the proof of Theorem \ref{main2}.
The following lemma shows that it is satisfied by a natural class of $(n+2)$-angulated categories arising from $n$-cluster tilting theory.
Moreover, in this setting the integer $m_X$ in Condition \ref{con} can be chosen uniformly for all indecomposable objects $X$.

\begin{lemma}\label{mmm}
Let $n\geqslant 2$, and let $\Lambda$ be a finite-dimensional algebra over an
algebraically closed field $k$ such that
${\rm gl.dim}\Lambda\leqslant n$. Suppose that
$\mathcal{F}\subseteq{\rm mod}\Lambda$ is an $n$-cluster tilting
subcategory. Set
$$
\C={\rm add}\{F[in]~|~F\in\mathcal{F},~i\in\mathbb{Z}\}
\subseteq D^b({\rm mod}\Lambda).
$$
Then $\C$ is an $(n+2)$-angulated category with $n$-suspension functor
$\Sigma^n=[n]$. Moreover, $\C$ satisfies Condition \ref{con}, and one
may take $m_X=2$ for every $X\in\ind(\C)$.
\end{lemma}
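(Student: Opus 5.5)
The $(n+2)$-angulated structure on $\C$ comes from \cite[Theorem 1]{GKO}, which applies once we know that $\C$ is an $n$-cluster tilting subcategory of $D^b({\rm mod}\Lambda)$ closed under $[n]$. For ${\rm gl.dim}\Lambda\leqslant n$ this is Iyama's result that $\mathcal F[n\mathbb Z]$ is $n$-cluster tilting in $D^b({\rm mod}\Lambda)$. Closure under $[n]$ holds by construction, so $\C$ is $(n+2)$-angulated with $\Sigma^n=[n]$.

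For Condition \ref{con} with $m_X=2$, let $X\in\ind(\C)$ and write $X\simeq F[in]$ with $F\in\mathcal F$ indecomposable and $i\in\mathbb Z$. Take $U\in\ind(\C)$ with $\Hom_{\C}(U,X)\neq0$, and write $U\simeq G[jn]$ with $G\in\mathcal F$ indecomposable. It then suffices to show
$$\Hom_{\C}(U,\Sigma^{-2n}X)=\Hom_{D^b}(G,F[(i-j-2)n])=0.$$
Since $\Hom_{\C}(U,X)\neq0$, this gives $\dim_k\Hom_{\C}(U,\Sigma^{-2n}X)=0<\dim_k\Hom_{\C}(U,X)$, which is exactly $(\heartsuit)$.

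The key step is a degree computation. Between modules, $\Hom_{D^b}(G,F[t])=\Ext^t_\Lambda(G,F)$ vanishes unless $0\leqslant t\leqslant {\rm gl.dim}\Lambda\leqslant n$.
\begin{itemize}
\item From $\Hom(U,X)\neq0$ we get $0\leqslant (i-j)n\leqslant n$, so $i-j\in\{0,1\}$.
\item Consequently $(i-j-2)n\in\{-2n,-n\}$ is negative.
\item Negative extensions between modules vanish, so $\Hom(U,\Sigma^{-2n}X)=0$.
\end{itemize}

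There is no real obstacle here: the only inputs are Iyama's cluster tilting statement and the vanishing of $\Ext^t$ outside $[0,n]$. In fact the same computation shows that $m_X=1$ would already suffice whenever $i-j=0$. Choosing $m_X=2$ handles both cases $i-j\in\{0,1\}$ uniformly, which is why it works for every $X$.
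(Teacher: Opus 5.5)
Your proof is correct and takes essentially the same approach as the paper. In both, the $(n+2)$-angulated structure comes from \cite[Theorem 1]{GKO}; you make explicit, via Iyama's theorem, the $n$-cluster tilting property of $\C$ in $D^b({\rm mod}\Lambda)$ that the paper simply calls ``the standard construction.'' Condition \ref{con} then follows from the same degree count: $\Ext^t_\Lambda$ vanishes outside $0\leqslant t\leqslant n$, which forces $i-j\in\{0,1\}$ and hence $\Hom_{\C}(U,\Sigma^{-2n}X)=0<\dim_k\Hom_{\C}(U,X)$.
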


\proof
By the standard construction of $n$-cluster tilting subcategories, $\C$ is closed under $[n]$ and admits an $(n+2)$-angulated structure (see \cite[Theorem 1]{GKO}).
Take indecomposable objects $U=U_{0}[qn]$ and $X=X_{0}[pn]$ with $U_{0},X_{0}\in\mathcal{F}$. Then
\begin{equation*}
  \Hom_{\C}(U,X)\cong\Ext_{\Lambda}^{(p-q)n}(U_{0},X_{0}). \tag{$\spadesuit$}
\end{equation*}
Note that ${\rm gl.dim}\Lambda\leqslant n$, if $(\spadesuit)$ is nonzero, then $(p-q)n\in\{0,n\}$, that is, $(p-q)\in\{0,1\}$. On the other hand, Take $m_{x}=2$, we have
$$\Hom_{\C}(U,\Sigma^{-2n}X)\cong\Hom_{ D^{b}({\rm mod}\Lambda)}(U_{0}[qn],X_{0}[(p-2)n])\cong\Ext_{\Lambda}^{(p-q-2)n}(U_{0},X_{0}).$$
Therefore, when $(p-q)\in\{0,1\}$, $\dim\Hom_{\C}(U,\Sigma^{-2n}X)=0$. Moreover,
$$\dim_{k}\Hom_{\C}(U,\Sigma^{-2n}X)=0<\dim_{k}\Hom_{\C}(U,X),$$
and hence $\C$ satisfies  Condition \ref{con}.
\qed

\begin{remark}
Lemma \ref{mmm} provides a broad class of examples satisfying Condition \ref{con}.
Indeed, every finite-dimensional $k$-algebra of global dimension at most $n$ that admits an $n$-cluster tilting subcategory gives rise, through the standard derived construction above, to an $(n+2)$-angulated category satisfying Condition \ref{con}.
In particular, this applies to $n$-representation-finite algebras in the sense of Iyama and Oppermann \cite{io}.
\end{remark}

To illustrate Lemma \ref{mmm} explicitly, we now present a concrete
$4$-angulated category satisfying Condition \ref{con}.

\begin{example}
Let $\Lambda=k\Q_A/\I_A$ be a finite-dimensional algebra over an algebraically closed field $k$ given by the following bound quiver $(\Q_A,\I_A)$:
\[ \Q_A = \ \
\xymatrix{
  1 \ar[r]^{\alpha}
& 2 \ar[r]^{\beta}
& 3,
}
\ \
\I_A = \langle \beta\alpha \rangle.
\]
Then $\Lambda$ is the Auslander algebra of the linear quiver of type $A_{2}$. It satisfies ${\rm gl.dim}\Lambda= 2$. Let $\mathcal{F}=\add M\subseteq {\rm mod}\Lambda$ be an $2$-cluster tilting subcategory with $M=(\Lambda\oplus D\Lambda)_{\rm basic}$. Define
$$\C={\rm add}\{F[2i]~|~F\in\mathcal{F}, i\in\mathbb {Z}\}\subseteq D^{b}({\rm mod}\Lambda).$$
Then, by Lemma \ref{mmm}, $\C$ is an $4$-angulated category with $2$-suspension functor $\Sigma^{2}=[2]$. Moreover, one can verify $$\dim_{k}\Hom_{\C}(U,\Sigma^{-4}X)=0<\dim_{k}\Hom_{\C}(U,X)$$ for indecomposable objects $U$ and $X$. So $\C$ satisfies Condition \ref{con}.
\end{example}

\vspace{2mm}

The following example shows that Condition \ref{con} is sufficient, but
not necessary, for the conclusion of Theorem \ref{main2} to hold.

\begin{example}
Let $k$ be an algebraically closed field, and let
$\Lambda=\Lambda(2,1)$ be the nonsingular higher tetrahedral algebra introduced by Erdmann and
Skowro\'nski \cite{ES}. Then $\Lambda$ is a finite-dimensional symmetric
algebra of dimension $72$ and is periodic of period $4$. In particular,
$\Omega_{\Lambda^e}^{4}(\Lambda)\simeq\Lambda.$
Set
$\C={\rm proj}\,\Lambda.$
By \cite[Theorem 5.5]{Lin}, the category $\C$ admits a $4$-angulated
structure whose $2$-suspension functor is
$\Sigma^2=\id_{\C}.$
Since $\Lambda$ is finite dimensional, there are only finitely many
isomorphism classes of indecomposable projective $\Lambda$-modules.
Hence $\C$ is locally finite. By \cite[Theorem 3.8]{Z2}, it follows that
$\Ker\pi$ is generated by the classes of Auslander-Reiten $4$-angles
in $\C$.

We now show that $\C$ does not satisfy Condition \ref{con}. Let
$X\in\ind(\C)$ be a nonzero indecomposable projective $\Lambda$-module
and take $U=X$. Since $\Sigma^2=\id_{\C}$, we have
$
\Sigma^{-2m}X\simeq X
$
for every positive integer $m$. Consequently,
$$
\dim_k\Hom_{\C}(X,\Sigma^{-2m}X)
=
\dim_k\End_{\C}(X)
$$
for every $m\geq1$. Moreover,
$\Hom_{\C}(X,X)\neq0$,
since $\End_{\C}(X)$ contains the identity morphism. Thus, for every
positive integer $m$,
$$
\dim_k\Hom_{\C}(X,\Sigma^{-2m}X)
=
\dim_k\Hom_{\C}(X,X),
$$
and therefore no positive integer $m_X$ can satisfy the inequality
required in Condition \ref{con}. Hence Condition \ref{con} fails.

Nevertheless, $\C$ is locally finite and satisfies the hypothesis that
$\Ker\pi$ is generated by the classes of Auslander-Reiten $4$-angles.
This shows that Condition \ref{con} is sufficient, but not necessary,
for the local finiteness conclusion of Theorem \ref{main2}.
\end{example}

\vspace{2mm}

\paragraph{Competing Interests}
The authors declare that they have no conflicts of interest to this work.

\paragraph{Data Availability}
Data sharing not applicable to this article as no datasets were generated or analysed during the current study.

\textbf{Jian He}\\
Department of Applied Mathematics, Lanzhou University of Technology, 730050 Lanzhou, Gansu, P. R. China\\
E-mail: \textsf{jianhe30@163.com}\\[0.3cm]
\textbf{Yu-Zhe Liu}\\
School of Mathematics and statistics, Guizhou University, 550025, Guiyang, Guizhou, P. R. China\\
E-mail: \textsf{liuyz@gzu.edu.cn / yzliu3@163.com}
\\[0.3cm]
\textbf{Panyue Zhou}\\
School of Mathematics and Statistics, Changsha University of Science and Technology, 410114 Changsha, Hunan,  P. R. China\\
E-mail: \textsf{panyuezhou@163.com}

\end{document}